\theoremstyle{plain}
\newtheorem{theorem}[subsection]{Theorem}
\newtheorem{proposition}[subsection]{Proposition}
\newtheorem{corollary}[subsection]{Corollary}
\theoremstyle{definition}
\newtheorem{remark}[subsection]{Remark}
\newcommand{\CC}{ \ensuremath{\mathcal {C}} }
\newcommand{\XX}{ \ensuremath{\mathcal {X}} }
\newcommand{\Eq}{ \ensuremath{\mathrm{Eq}} }
\newcommand{\Pt}{ \ensuremath{\mathrm{Pt}} }
\newcommand{\map}[2]{ \ensuremath{ \xymatrix@1@C=15pt{ #1 \ar[r] & #2 } } }
\newcommand{\mono}[2]{ \ensuremath{ \xymatrix@1@C=15pt{ #1 \ar@{ >->}[r] & #2 } } }
\newcommand{\regepi}[2]{ \ensuremath{ \xymatrix@1@C=15pt{ #1 \ar@{>>}[r] & #2 } } }
\begin{document}
\title[Beck-Chevalley condition and Goursat categories]%
{Beck-Chevalley condition and Goursat categories }

\author{ Marino Gran}
\address{Insitut de Recherche en Math\'ematique et Physique, Universit\'e Catholique de Louvain, Chemin du Cyclotron 2,
1348 Louvain-la-Neuve, Belgium}

\email{marino.gran@uclouvain.be}

\author[Diana Rodelo]{Diana Rodelo}
\address{Departamento de Matem\'atica, Faculdade de Ci\^{e}ncias e Tecnologia, Universidade
do Algarve, Campus de Gambelas, 8005-139 Faro and Centre for Mathematics of the University of Coimbra, 3001-454 Coimbra, Portugal}

\email{drodelo@ualg.pt}
\thanks{This work was partially supported by the Centre for Mathematics of the University of
Coimbra -- UID/MAT/00324/2013, funded by the Portuguese Government through FCT/MEC and
co-funded by the European Regional Development Fund through the Partnership Agreement
PT2020.}

\keywords{Goursat categories, $3$-permutable varieties, Shifting Lemma, Beck-Chevalley condition, reflective subcategories, Galois groupoid.}

\subjclass[2000]{
08C05, %Categories of algebras
08B05, % Categories admitting limits (complete categories), functors preserving limits, completions
18C05,
18B99, % Special categories
18E10} % Exact categories, abelian categories

%%%%%%%%%%%%%%%%%%%%%%%%%%%%%%%%%%%%%%%  ABSTRACT  %%%%%%%%%%%%%%%%%%%%%%%%%%%%%%%%%%%%%%%%%%%%%%%%%%%%%%%%%%%%%%%%%%%%
\begin {abstract}
We characterise regular Goursat categories through a specific stability property of regular epimorphisms with respect to pullbacks. Under the assumption of the existence of some pushouts this property can be also expressed as a restricted Beck-Chevalley condition, with respect to the fibration of points, for a special class of commutative squares. In the case of varieties of universal algebras these results give, in particular, a structural explanation of the existence of the ternary operations characterising $3$-permutable varieties of universal algebras.
\end {abstract}

%\infonum{?}{15}

\date{\today}

\maketitle

%%%%%%%%%%%%%%%%%%%%%%%%%%%%%%%%%%%%%%%  INTRODUCTION  %%%%%%%%%%%%%%%%%%%%%%%%%%%%%%%%%%%%%%%%%%%%%%%%%%%%%%%%%%%%%%%%

A variety of universal algebras is called a Mal'tsev variety \cite{Smith} when any pair of congruences $R$ and $S$ on the same algebra $2$-permute, meaning that $RS=SR$. The celebrated Mal'tsev theorem asserts that the algebraic theory of such a variety is characterised by the existence of a ternary term $p(x,y,z)$ such that the identities $p(x,y,y)=x$ and $p(x,x,y)=y$ hold~\cite{Maltsev-Sbornik}. The weaker $3$-permutability of congruences $RSR=SRS$, which defines $3$-permutable varieties, is also equivalent to the existence of two ternary operations $r$ and $s$ such that the identities $r(x,y,y)=x$, $r(x,x,y)=s(x,y,y)$ and $s(x,x,y)=y$ hold~\cite{Hagemann-Mitschke}. A nice feature of $3$-permutable varieties is the fact that they are congruence modular, a condition that plays a crucial role in the development of commutator theory \cite{FM, Gumm}.

Many interesting results have been discovered in regular Mal'tsev categories \cite{CLP} and in regular Goursat categories \cite{CKP}, which can be seen as the categorical extensions of Mal'tsev varieties and of $3$-permutable varieties, respectively. The interested reader will find many properties of these categories in the references  ~\cite{BerBour, BB, MCFPO, BR, CKP, CLP, GR3x3, GRCuboid, JK, JRVdL, L, Pedicchio}, for instance.

For Mal'tsev categories, and many other algebraic categories, there are some elegant characterisations expressed in terms of the fibration of points~\cite{MCFPO, BB}. Recall that, given a category $\CC$ with pullbacks, the fibration of points is the functor $\Pt (\CC) \to \CC$ associating the codomain $W$ to any ``point'' in $\mathcal C$, i.e. to any split epimorphism $g \colon U \to W$ with a given splitting $j \colon W \to U$. For any morphism $\beta\colon Y \to W$ in $\mathcal C$, the change-of-base functor, defined by pulling back along $\beta$, is denoted by $\beta^*\colon \Pt_{W}(\CC) \to \Pt_Y(\CC)$. One of the goals of this paper is to give a characterisation of regular Goursat categories by using this fibration. It turns out that such a characterisation not only involves the change-of-base functors $\beta^*$ with respect to the fibration of points, but also their left adjoints $\beta_!$, which exist as soon as the category admits pushouts of split monomorphisms~\cite{NEKEAC}. It is precisely the so-called Beck-Chevalley condition (Theorem~\ref{corollary Goursat cube}) for the commutative squares of the following type
$$
\vcenter{\xymatrix{
  X \ar@{>>}[r]^-{\alpha} \ar@<-2pt>[d]_-f & U  \ar@<-2pt>[d]_-g \\
  Y \ar@{>>}[r]_-{\beta} \ar@<-2pt>[u]_-i & W, \ar@<-2pt>[u]_-j }}
$$
where $\alpha$ and $\beta$ are regular epimorphisms and $f$ and $g$ are split epimorphisms (i.e. the pair $(\alpha, \beta)$ is a regular epimorphism in the category $\Pt (\CC)$).
In fact, the Goursat property can be also expressed in terms of the functors $\beta_!$ alone: $\beta_!$ preserves binary products, for any regular epimorphism $\beta$.

The proof of these results also relies on the fact that the Shifting Lemma~\cite{Gumm} holds in any regular Goursat category~\cite{BG0}, since the lattice of equivalence relations on any object is modular~\cite{CKP}. A more general characterisation of Goursat categories among regular categories is also obtained without requiring the existence of pushouts along split monomorphisms and involves a stability property for regular epimorphisms (Theorem \ref{Goursat cube}).
In the varietal case, the existence of the ternary operations characterising $3$-permutable varieties mentioned above can be deduced from this theorem by applying it to a suitable diagram involving free algebras (Remark~\ref{ternary}).

As a consequence of the results in this paper, we obtain an extension to regular Goursat categories of a known result in the regular Mal'tsev context (Proposition 3.6 in~\cite{BR}): the reflector to a (regular epi)-reflective subcategory of a regular Goursat category always preserves pullbacks of split epimorphisms along split epimorphisms (Proposition~~\ref{I preserves pbs}). It then follows that the so-called internal Galois pregroupoid~\cite{GJ} associated to an extension is necessarily an internal groupoid (Corollary~\ref{Galois groupoid}).

Let us finally mention that the possibility of formulating some exactness properties of Mal'tsev categories in terms of a suitable Beck-Chevalley condition has been first suggested to the authors by Zurab Janelidze during the \emph{Workshop in Category Theory} at the University of Coimbra in 2012, where the work
~\cite{GRCuboid} was presented. We would like to warmly thank Zurab for this suggestion. The present paper shows that a suitable Beck-Chevalley condition characterises Goursat categories. Independently, Clemens Berger and Dominique Bourn have discovered a stronger condition that holds in the special case of exact Mal'tsev categories (Proposition $1.24$ in \cite{BerBour}). \\ We would like to thank Francis Borceux for some useful remarks concerning the proof of Theorem \ref{Goursat cube}. 
%%%%%%%%%%%%%%%%%%%%%%%%%%%%%%%%%%%%%%%  SECTION: GOURSAT CATEGORIES  %%%%%%%%%%%%%%%%%%%%%%%%%%%%%%%%%%%%%%%%%%%%%%%%%

\section{Goursat categories}
In this section we give a new characterisation for a regular category~\cite{EC} to be a Goursat category through a stability property of regular epimorphisms, similar to the known characterisation for regular Mal'tsev categories given in Proposition 3.6 in~\cite{GRCuboid}.

Recall that a regular category $\CC$ is called a \emph{Goursat} category~\cite{CKP} when the composition of (effective) equivalence relations $R$ and $S$ on a same object in $\CC$ is $3$-permutable: $RSR=SRS$. Given a relation $R=(R, r_1,r_2)$ from an object $X$ to an object $Y$, we write $R^o$ for the opposite relation $(R,r_2,r_1)$ from $Y$ to $X$.

From~\cite{CKP} and~\cite{JRVdL} we have:

\begin{theorem}
\label{Goursat characterisations} Let $\CC$ be a regular category. The following conditions are equivalent:
\begin{enumerate}
    \item[(i)] $\CC$ is a Goursat category;
    \item[(ii)] $E^{\circ}\leqslant EE$, for any reflexive relation $E$;
    \item[(iii)] $(1_X\wedge T)T^{\circ}(1_X\wedge T)\leqslant TT$, for any relation $T$ on an object $X$;
    \item[(iv)] $PP^{\circ}PP^{\circ}\leqslant PP^{\circ}$, for any relation $P$.
\end{enumerate}
\end{theorem}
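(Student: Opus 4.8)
The four conditions fall into two pairs, $\{(\mathrm{ii}),(\mathrm{iii})\}$ and $\{(\mathrm{i}),(\mathrm{iv})\}$, within each of which the equivalence is essentially formal, so the substance of the theorem is one bridge between the pairs, exactly as in the Goursat counterpart of the Mal'tsev characterisations treated in~\cite{CKP} and~\cite{JRVdL}. For $(\mathrm{iii})\Rightarrow(\mathrm{ii})$ one takes $T=E$: reflexivity of $E$ gives $1_X\wedge T=1_X$, and $(\mathrm{iii})$ reduces to $E^{\circ}\leqslant EE$. For $(\mathrm{ii})\Rightarrow(\mathrm{iii})$, given a relation $T$ on $X$ let $D\hookrightarrow X$ be the subobject corresponding to the partial identity $1_X\wedge T$, i.e.\ the part of $X$ on which $T$ is reflexive; the restriction of $T$ to $D$ is a reflexive relation there, $(\mathrm{ii})$ applies to it, and transporting the resulting inequality back up along $D\hookrightarrow X$ (using that the induced partial identity is $\leqslant 1_X$) yields $(1_X\wedge T)T^{\circ}(1_X\wedge T)\leqslant TT$. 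This domain-restriction device also avoids unions of subobjects, which a general regular category need not possess.

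For $(\mathrm{i})\Rightarrow(\mathrm{iv})$, present a relation $P\colon X\to Y$ by its jointly monic span with legs $p\colon P_0\to X$ and $q\colon P_0\to Y$, so that $PP^{\circ}=p^{\circ}\,\Eq(q)\,p$ with $\Eq(q)$ the kernel pair of $q$; then $PP^{\circ}PP^{\circ}=p^{\circ}\,\Eq(q)\,\Eq(p)\,\Eq(q)\,p$, the extra factor being the kernel pair $\Eq(p)$ of $p$ inserted between two copies of $\Eq(q)$. Since $\Eq(p)$ and $\Eq(q)$ are effective equivalence relations on $P_0$, $3$-permutability rewrites $\Eq(q)\,\Eq(p)\,\Eq(q)$ as $\Eq(p)\,\Eq(q)\,\Eq(p)$, and since $p^{\circ}\,\Eq(p)=p^{\circ}$ and $\Eq(p)\,p=p$ the two outer copies of $\Eq(p)$ are absorbed, leaving $PP^{\circ}PP^{\circ}=p^{\circ}\,\Eq(q)\,p=PP^{\circ}$. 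Conversely, for $(\mathrm{iv})\Rightarrow(\mathrm{i})$ let $R,S$ be effective equivalence relations on the same object and put $P=RS$; then $PP^{\circ}=RS(RS)^{\circ}=RSSR=RSR$, so $(\mathrm{iv})$ says $RSRSR=(RSR)(RSR)\leqslant RSR$, i.e.\ $RSR$ is transitive. Being also reflexive and symmetric, $RSR$ is an equivalence relation; it contains $R$ and $S$, and any equivalence relation $E$ with $R,S\leqslant E$ satisfies $RSR\leqslant EEE=E$, so $RSR$ is the smallest equivalence relation containing $R$ and $S$. The same argument applied to $SR$ identifies $SRS$ with that same relation, whence $RSR=SRS$, i.e.\ $\CC$ is Goursat.

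What remains, and what I expect to be the real difficulty, is to bridge the two pairs, say by proving $(\mathrm{ii})\Leftrightarrow(\mathrm{i})$. This is the one step that is not a formal manipulation in the allegory of relations: conditions $(\mathrm{ii})$ and $(\mathrm{iii})$ only constrain reflexive relations, so feeding them the symmetric relation $PP^{\circ}$ is vacuous; instead one must apply $(\mathrm{ii})$ to the reflexive but non-symmetric composite $RS$ of two equivalence relations and then extract $RSR=SRS$. The mechanism, following~\cite{CKP}, is to show that $(\mathrm{ii})$ forces $EE^{\circ}$ to be transitive --- hence, being reflexive and symmetric, an equivalence relation --- for every reflexive $E$, equivalently that $EE^{\circ}=E^{\circ}E$ is the equivalence relation generated by $E$; this is where the modular law for relations in a regular category is genuinely used. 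Granting this, taking $E=RS$ shows $RSR=EE^{\circ}$ is transitive, and the argument of $(\mathrm{iv})\Rightarrow(\mathrm{i})$ above then gives $RSR=SRS$; the converse $(\mathrm{i})\Rightarrow(\mathrm{ii})$ runs the same identification of $EE^{\circ}$ with the generated equivalence relation, now using $3$-permutability as the hypothesis. Together with $(\mathrm{ii})\Leftrightarrow(\mathrm{iii})$ and $(\mathrm{i})\Leftrightarrow(\mathrm{iv})$ this yields the full equivalence, and hence the theorem.
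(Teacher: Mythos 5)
The parts you actually prove are correct, and they form a different decomposition from the paper's. Your $(\mathrm{iii})\Rightarrow(\mathrm{ii})$ (take $T=E$) and your restriction argument for $(\mathrm{ii})\Rightarrow(\mathrm{iii})$ are valid: with $d\colon D\rightarrowtail X$ the subobject whose partial identity is $e=1_X\wedge T$, the restricted relation is reflexive, and conjugating the inequality given by (ii) along $d$ and using $e\leqslant 1_X$ yields $eT^{\circ}e\leqslant eTeTe\leqslant TT$. Likewise your $(\mathrm{i})\Leftrightarrow(\mathrm{iv})$ is correct: $PP^{\circ}=p^{\circ}\Eq(q)p$, $PP^{\circ}PP^{\circ}=p^{\circ}\Eq(q)\Eq(p)\Eq(q)p$, $3$-permutability of the kernel pairs plus absorption gives (iv), and conversely $P=RS$ makes $RSR$ transitive, hence the join of $R$ and $S$, hence equal to $SRS$. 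For comparison, the paper quotes $(\mathrm{i})\Leftrightarrow(\mathrm{ii})$ from \cite{JRVdL} and $(\mathrm{i})\Leftrightarrow(\mathrm{iv})$ from \cite{CKP}, and its only new argument is $(\mathrm{iv})\Rightarrow(\mathrm{iii})$.

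The genuine gap is the one you flag yourself: nothing in your text proves any implication linking the pair $\{(\mathrm{ii}),(\mathrm{iii})\}$ to the pair $\{(\mathrm{i}),(\mathrm{iv})\}$. The claim you ask to ``grant'' --- that (ii) forces $EE^{\circ}$ to be transitive for every reflexive $E$ --- is, given the parts you did prove, equivalent to the whole theorem, so the proposal establishes only the two formal halves and defers the entire substance. The hint you offer does not carry the weight: the modular law of the calculus of relations holds in \emph{every} regular category, so it cannot by itself turn (ii) into a Goursat-type conclusion; what is needed is a purpose-built auxiliary relation. That is precisely how the paper bridges the pairs in $(\mathrm{iv})\Rightarrow(\mathrm{iii})$: given $(x,x),(y,x),(y,y)\in T$ it defines $P$ from $X\times X$ to $X$ by $((a,b),c)\in P$ iff $(a,c)\in T$ and $(c,b)\in T$, applies (iv) to this $P$ to get $(x,x)\,PP^{\circ}\,(y,y)$, and reads off $(x,y)\in TT$; together with the trivial $(\mathrm{iii})\Rightarrow(\mathrm{ii})$ and the two quoted equivalences this closes the cycle. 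To complete your route you would have to actually prove $(\mathrm{ii})\Rightarrow(\mathrm{i})$ (essentially Hagemann's theorem, Theorem 1 of \cite{JRVdL}, which again uses such an auxiliary construction) or supply an argument like the paper's $(\mathrm{iv})\Rightarrow(\mathrm{iii})$; gesturing at \cite{CKP} ``in mechanism'' with a ``granting this'' leaves the key implication unproved.
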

\begin{proof}
(i) $\Leftrightarrow$ (ii). By Theorem 1 in~\cite{JRVdL}.\\
(i) $\Leftrightarrow$ (iv). By Theorem 3.5 of~\cite{CKP}.\\
(i) $\Leftrightarrow$ (iii). This type of equivalence was mentioned at the end of~\cite{JRVdL}. Note that condition (iii) is a stronger version of condition (ii), so that it will suffice to prove that (iv) $\Rightarrow$ (iii). In a regular context, it suffices to give a proof in set-theoretical terms (see Metatheorem $A.5.7$ in \cite{BB}, for instance). Suppose that $(x,x) \in T, (y,x)\in T$ and $(y,y)\in T$. We want to prove that $(x,y)\in TT$, i.e. that $(x,\alpha) \in T$ and $(\alpha, y) \in T$, for some $\alpha \in X$. We define a relation $P$ from $X\times X$ to $X$ by: $((a,b),c)\in P$ if and only if $(a,c)\in T$ and $(c,b)\in T$. One then sees that
$$ (x,x)Px, (y,x)Px, (y,x)Py, (y,y)Py$$
implying that
 $$  (x,x) PP^{\circ}PP^{\circ} (y,y).$$
  By condition  (iv) it follows that
   $$ (x,x) PP^{\circ} (y,y)$$
 and there is an $\alpha \in X$ such that $$(x,x) P \alpha, (y,y)P\alpha.$$ One concludes then that
 $$xT\alpha T y.$$
\end{proof}

Before proving the main results of this section we need to recall a useful property of regular Goursat categories, namely the validity of the so-called \emph{Shifting Lemma} \cite{Gumm}. In the context of varieties of universal algebras this property is equivalent to the modularity of the lattice of congruences on any of its algebras. The modularity of the lattice $(L_X, \vee, \wedge)$ of equivalence relations on any object $X$ also holds in any regular Goursat category, as shown in \cite{CKP}. More precisely, given equivalence relations $R, S$ and $T$ in $L_X$,
$$
    R\leqslant T \Rightarrow \left(\; R\vee (S\wedge T)=(R\vee S)\wedge T \;\right).
$$
 In this context, the supremum $R \vee S$ of two equivalence relations in $L_X$ is given by the triple relational composite $R \vee S = RSR$. By using generalized elements the validity of the Shifting Lemma can be expressed as follows:

\noindent \textbf{Shifting Lemma} \\
Given equivalence relations $R,S$ and $T$ on the same object $X$ such that $R\wedge S\leqslant T$, whenever $x,y,t,z$ are elements in $X$ with $(x,y)\in R \wedge T$, $(x,t) \in S$, $(y,z)\in S$ and $(t,z)\in R$, it then follows that $(t,z) \in T$:
\begin{equation}\label{shifting}
  \vcenter{\xymatrix@C=30pt{
    x \ar@{-}[r]^-S \ar@{-}[d]^-R \ar@(l,l)@{-}[d]_-T & t \ar@{-}[d]_-R \ar@(r,r)@{--}[d]^-T \\
    y \ar@{-}[r]_-S  & z }}
\end{equation}

The Shifting Lemma holds in any regular Goursat category~\cite{BG0}, as we are now going to recall by using the internal logic of a regular category. Given a diagram \eqref{shifting}, the $3$-permutability of the equivalence relations implies that $$(t,z) \in S (R \wedge T) S = (R\wedge T) S (R\wedge T).$$ Accordingly, there exist $a$ and $b$ such that $(t,a) \in R \wedge T$, $(a,b) \in S$, and $(b, z) \in R \wedge T$. Then $(a,b)$ is also in $R$, thus in $T$, since $R \wedge S \leqslant T$; it follows that $(t,z) \in T$, as desired.

The property expressed by the Shifting Lemma has been extended to a categorical context in~\cite{BG1}, giving rise to the notion of Gumm category. Indeed, the Shifting Lemma can be equivalently reformulated in any finitely complete category $\mathcal C$ by asking that a specific class of internal functors are discrete fibrations (see \cite{BG0} and \cite{BG1} for more details).

One of the fundamental results in this paper is given in Theorem~\ref{Goursat cube} below, where regular Goursat categories are characterised by a stability property of regular epimorphisms with respect to pullbacks. Such a stability condition is an extension of the following one where regular epimorphisms are stable with respect to kernel pairs:

\begin{theorem}~\cite{GR3x3}
\label{Goursat po}
Let $\CC$ be a regular category. The following conditions are equivalent:
\begin{enumerate}
\item[$(i)$] $\CC$ is a Goursat category;
\item[$(ii)$] any commutative diagram
$$\xymatrix{
  X \ar@{>>}[r]^-{\alpha} \ar@<-2pt>[d]_-f & U  \ar@<-2pt>[d]_-g \\
  Y \ar@{>>}[r]_-{\beta} \ar@<-2pt>[u]_-i & W, \ar@<-2pt>[u]_-j }
$$
 where $f$ and $g$ are split epimorphisms and $\alpha$ and $\beta$ are regular epimorphisms (commuting also with the splittings), is a \emph{Goursat pushout}: the comparison morphism $\lambda\colon \Eq(f) \to \Eq(g)$ induced by the universal property of the kernel pair $\Eq(g)$ of $g$ is also a regular epimorphism.
\end{enumerate}
\end{theorem}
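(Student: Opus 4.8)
The plan is to prove the two implications separately, in each case reducing to set-theoretic arguments through the metatheorem for regular categories (Metatheorem $A.5.7$ in~\cite{BB}).

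For $(i)\Rightarrow(ii)$ I would check that $\lambda\colon\Eq(f)\to\Eq(g)$ is a regular epimorphism by showing that every element $(u_1,u_2)$ of $\Eq(g)$ lifts to an element of $\Eq(f)$ along $\alpha$. Since $\alpha$ is a regular epimorphism, pick $p_1,p_2$ with $\alpha p_k=u_k$; then $\beta f p_1=g\alpha p_1=gu_1=gu_2=g\alpha p_2=\beta f p_2$, so $(p_1,p_2)\in\Eq(\beta f)$. The splittings now let one correct the $f$-component: from $fi=1$ we get $(p_k,ifp_k)\in\Eq(f)$, and from $\alpha i=j\beta$ we get $\alpha(ifp_1)=j\beta fp_1=j\beta fp_2=\alpha(ifp_2)$, hence $(ifp_1,ifp_2)\in\Eq(\alpha)$; therefore $(p_1,p_2)\in\Eq(f)\Eq(\alpha)\Eq(f)$. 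As $\Eq(f)$ and $\Eq(\alpha)$ are effective equivalence relations, the Goursat hypothesis yields $\Eq(f)\Eq(\alpha)\Eq(f)=\Eq(\alpha)\Eq(f)\Eq(\alpha)$ (both equal the join $\Eq(f)\vee\Eq(\alpha)$), so $(p_1,p_2)\in\Eq(\alpha)\Eq(f)\Eq(\alpha)$, which produces $x_1,x_2$ with $(p_1,x_1),(x_2,p_2)\in\Eq(\alpha)$ and $(x_1,x_2)\in\Eq(f)$. Then $\alpha x_k=\alpha p_k=u_k$, so $(x_1,x_2)$ is the required lift and $\lambda$ is a regular epimorphism. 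The passage from $\Eq(f)\Eq(\alpha)\Eq(f)$ to $\Eq(\alpha)\Eq(f)\Eq(\alpha)$ may equivalently be carried out using the Shifting Lemma.

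For $(ii)\Rightarrow(i)$ I would invoke Theorem~\ref{Goursat characterisations} and prove, say, that $E^{\circ}\leqslant EE$ for every reflexive relation $E=(E,e_1,e_2)$ on an object $X$ with reflexivity $e\colon X\to E$. The key observation is that for any commutative square of the type in $(ii)$ the surjectivity of $\lambda$ unwinds --- again using $fi=1$ and $\alpha i=j\beta$ exactly as above, together with $\Eq(\alpha),\Eq(f)\leqslant\Eq(\beta f)$ --- to the equality $\Eq(\beta f)=\Eq(\alpha)\Eq(f)\Eq(\alpha)$ (equivalently $\Eq(f)\vee\Eq(\alpha)=\Eq(\alpha)\Eq(f)\Eq(\alpha)$). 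Since the projections $e_1,e_2\colon E\to X$ are split, hence regular, epimorphisms and $e$ is available, one can build such a square out of the data of $E$ --- for instance from a kernel pair of $e_1$ with a suitable splitting, or from the object of composable pairs of $E$, possibly together with an auxiliary quotient --- and the construction must be arranged so that, for the morphisms occurring in it, this equality becomes precisely the inequality $E^{\circ}\leqslant EE$; Theorem~\ref{Goursat characterisations} then concludes.

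The main difficulty is this last step: one has to exhibit a commutative square with $f,g$ split epimorphisms, $\alpha,\beta$ regular epimorphisms and matching splittings in which the Goursat pushout condition is genuinely informative. The matching-splitting requirement is rigid --- under many of the obvious choices it forces $g$ to be an isomorphism, or makes $\Eq(f)$ and $\Eq(\alpha)$ behave like the two projections of a product, in which case $\lambda$ is automatically a regular epimorphism --- so the ambient objects (and possibly an extra quotient) must be chosen with care. By contrast, the implication $(i)\Rightarrow(ii)$ is routine once one has identified how the splittings are used.
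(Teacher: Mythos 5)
Your argument for (i) $\Rightarrow$ (ii) is correct and is essentially the standard one: lift $(u_1,u_2)\in \Eq(g)$ along the regular epimorphism $\alpha$, use the splittings to place the lift in $\Eq(f)\Eq(\alpha)\Eq(f)$, and apply $3$-permutability of the effective equivalence relations $\Eq(f)$ and $\Eq(\alpha)$. (Note that the present paper does not prove Theorem~\ref{Goursat po} at all; it quotes it from \cite{GR3x3}, so there is no in-paper proof to compare with.) One aside in your proposal is, however, false: this step cannot ``equivalently be carried out using the Shifting Lemma''. The Shifting Lemma holds in every congruence modular variety --- lattices, say --- and these are not $3$-permutable; if the Shifting Lemma sufficed for that step, condition (ii) would hold there, and the very theorem under discussion would then force $3$-permutability, a contradiction. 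The point is that the Shifting Lemma only concludes that a given pair lies in a third congruence; it cannot produce the new middle elements needed to pass from $RSR$ to $SRS$.

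The genuine gap is the implication (ii) $\Rightarrow$ (i), which you leave as a programme rather than a proof. Your reformulation of the surjectivity of $\lambda$ as the equality $\Eq(\beta f)=\Eq(\alpha)\Eq(f)\Eq(\alpha)$ is correct, but you never exhibit a square to which (ii) can usefully be applied, and you yourself identify the obstruction: the quotient of an arbitrary (effective) equivalence relation is not split, so one cannot realise an arbitrary pair of congruences on one object as $(\Eq(f),\Eq(\alpha))$ with $f$ split. The missing idea is to compare a relation with its regular image rather than two congruences on the same object: given a regular epimorphism $f\colon X\to Y$ and an equivalence relation $R$ on $X$ with image $f(R)$ in $Y\times Y$, apply (ii) to the square whose vertical arrows are the first projections $r_1\colon R\to X$ and $p_1\colon f(R)\to Y$, each split by the reflexivity morphism, and whose horizontal arrows are the induced regular epimorphism $R\to f(R)$ and $f$. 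Surjectivity of $\Eq(r_1)\to\Eq(p_1)$ then says that any $(a,b),(a,c)\in f(R)$ lift to $(x,y),(x,z)\in R$ with common first component, whence $(b,c)\in f(R)$ by symmetry and transitivity of $R$; thus $f(R)$ is an equivalence relation, and the characterisation of Goursat categories by ``regular images of equivalence relations along regular epimorphisms are equivalence relations'' (see \cite{CKP}) concludes. Your intended target, condition (ii) of Theorem~\ref{Goursat characterisations}, is reachable only after some such construction; as written, the implication (ii) $\Rightarrow$ (i) is not proved.
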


\begin{theorem}
\label{Goursat cube} Let $\CC$ be a regular category. The following conditions are equivalent:
\begin{enumerate}
\item[(i)] $\CC$ is a Goursat category;
\item[(ii)] for any commutative cube
\begin{equation}\label{cube}
\vcenter{\xymatrix@C=30pt@R=20pt{
    X \times_Y Z \ar@<-2pt>[dd] \ar@<-2pt>[dr] \ar@{.>}[rr]^-{\lambda} & & U \times_W V \ar@<-2pt>@{-->}[dd] \ar@<-2pt>[dr] \\
    & Z \ar@<-2pt>[dd]_(.7){l} \ar@<-2pt>[ul] \ar@{>>}[rr]^(.2){\gamma} & & V \ar@<-2pt>[dd]_-h \ar@<-2pt>[ul] \\
    X \ar@<-2pt>[uu] \ar@<-2pt>[dr]_-{f} \ar@{-->>}[rr]^(.7){\alpha} & & U \ar@<-2pt>@{-->}[uu] \ar@<-2pt>@{-->}[dr]_-g \\
    & Y \ar@<-2pt>[uu]_(.3){k} \ar@<-2pt>[ul]_-{i} \ar@{>>}[rr]_-{\beta} & & W, \ar@<-2pt>[uu]\ar@<-2pt>@{-->}[ul]_-j }}
\end{equation}
where the left and right faces are pullbacks of split epimorphisms and $\alpha, \beta$ and $\gamma$ are regular epimorphisms (commuting also with the splittings), then the comparison morphism $\lambda \colon X \times _Y Z \to U \times_W V$ is also a regular epimorphism;
\item[(iii)] for any commutative cube
\begin{equation}\label{generalcube}
\vcenter{\xymatrix@C=35pt@R=20pt{
    X \times_Y Z \ar@<-2pt>[dd]_(.6){\pi_X} \ar@<-2pt>[dr]_-{\pi_Z} \ar@{>>}[rr]^-{\delta} & & A \ar@<-2pt>@{-->}[dd] \ar@<-2pt>[dr] \\
    & Z \ar@<-2pt>[dd]_(.7){l} \ar@<-2pt>[ul]_(.4){\langle il, 1_Z \rangle} \ar@{>>}[rr]^(.3){\gamma} & & V \ar@<-2pt>[dd]_(.4){h} \ar@<-2pt>[ul] \\
    X \ar@<-2pt>[uu]_(0.3){\langle 1_X, kf \rangle} \ar@<-2pt>[dr]_-{f} \ar@{-->>}[rr]^(.7){\alpha} & & U \ar@<-2pt>@{-->}[uu] \ar@<-2pt>@{-->}[dr]_-g \\
    & Y \ar@<-2pt>[uu]_(.3){k} \ar@<-2pt>[ul]_-{i} \ar@{>>}[rr]_-{\beta} & & W, \ar@<-2pt>[uu]\ar@<-2pt>@{-->}[ul]_-j }}
\end{equation}
where the left face is a pullback of split epimorphisms, the right face is a commutative diagram of split epimorphisms and the horizontal arrows $\alpha, \beta, \gamma, \delta$ are regular epimorphisms (commuting also with the splittings), then the right face is a pullback.
\end{enumerate}
\end{theorem}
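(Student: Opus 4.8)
The plan is to establish the cycle $(ii)\Rightarrow(i)$, $(i)\Rightarrow(ii)$, $(i)\Rightarrow(iii)$ and $(iii)\Rightarrow(ii)$. Two of these are cheap. For $(ii)\Rightarrow(i)$, specialise the cube \eqref{cube} by putting $Z=X$, $l=f$, $\gamma=\alpha$, $V=U$, $h=g$; then the left and right faces are the kernel pairs $\Eq(f)=X\times_Y X$ and $\Eq(g)=U\times_W U$, and $\lambda$ is the canonical comparison $\Eq(f)\to\Eq(g)$, so $(ii)$ says precisely that every commutative square of split epimorphisms with regular epimorphic rows is a Goursat pushout, which by Theorem \ref{Goursat po} means that $\CC$ is a Goursat category. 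For $(iii)\Rightarrow(ii)$, given a cube as in \eqref{cube}, factor $\lambda$ through its image, $X\times_Y Z\twoheadrightarrow A\rightarrowtail U\times_W V$, say $\lambda=m\circ\delta$; since $\alpha$ and $\gamma$ are regular epimorphisms, the two canonical sections of $U\times_W V$ over $U$ and over $V$ factor through $A$, so $A$ acquires the structure of a commutative square of split epimorphisms over $U$, $V$, $W$. The cube obtained from the original left face, the original bottom, and $\delta$ on top then satisfies the hypotheses of $(iii)$, whence its right-hand face is a pullback, i.e. $m$ is an isomorphism and $\lambda$ is a regular epimorphism.

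The core of the proof is $(i)\Rightarrow(ii)$. Reasoning in the internal language of the regular category $\CC$, one must show that every $(u,v)$ with $g(u)=h(v)$ admits a preimage $(x,z)\in X\times_Y Z$ under $\lambda$; equivalently, writing $M$ for the image of $\lambda$ viewed as a relation from $U$ to $V$, that $M=U\times_W V$. The inclusion $M\leqslant U\times_W V$ is immediate. For the reverse one feeds the two lower faces $(X,f,Y)\to(U,g,W)$ and $(Z,l,Y)\to(V,h,W)$ into Theorem \ref{Goursat po}: these are Goursat pushouts, so the comparisons $\Eq(f)\to\Eq(g)$ and $\Eq(l)\to\Eq(h)$ are regular epimorphisms, i.e. the direct images of $\Eq(f)$ and $\Eq(l)$ along $\alpha$ and $\gamma$ are $\Eq(g)$ and $\Eq(h)$. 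A relational computation (using $g\alpha=\beta f$, $h\gamma=\beta l$ and that $f$, $l$, $\alpha$, $\gamma$, $\beta$ are regular epimorphisms) then recovers $\Eq(g)$ and $\Eq(h)$ from $M$ and its opposite, and shows that $M$ contains the ``canonical'' graph $\{(u,ng(u))\}$ coming from the splittings. What remains — and this is the step I expect to be the main obstacle — is to deduce $M=U\times_W V$ from this; relational algebra alone does not suffice, since in a merely Goursat category $M$ need not be difunctional, so that the $\Eq(h)$-saturation of $M$ (which already equals $U\times_W V$) cannot be removed formally. The way past this is the Shifting Lemma, applied on $X\times_Y Z$ to the equivalence relations induced by its two projections and by $\alpha\pi_X$ and $\gamma\pi_Z$ — legitimate because the lattice of equivalence relations in a Goursat category is modular; the Shifting Lemma is exactly what excludes the spurious pairs that a non-Goursat category would tolerate, forcing $M$ to be all of $U\times_W V$.

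Finally, $(i)\Rightarrow(iii)$ bootstraps on the equivalence $(i)\Leftrightarrow(ii)$ just proved. For a cube as in \eqref{generalcube}, the comparison $\lambda\colon X\times_Y Z\to U\times_W V$ is then a regular epimorphism, and in the factorisation $\lambda=m\circ\delta$ with $\delta$ regular epi this forces $m$ to be a regular epimorphism; hence it is enough to show that $m$ is a monomorphism, equivalently $\Eq(\delta)=\Eq(\lambda)$ on $X\times_Y Z$. One inclusion is automatic, and restricting both relations along the split monomorphisms $\langle 1_X,kf\rangle$ and $\langle il,1_Z\rangle$ yields $\Eq(\alpha)$ and $\Eq(\gamma)$ in each case — this uses compatibility of $\delta$ with the splittings. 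A second application of the Shifting Lemma on $X\times_Y Z$, again via modularity, then gives the remaining inequality $\Eq(\lambda)\leqslant\Eq(\delta)$, so $m$ is an isomorphism and the right-hand face of \eqref{generalcube} is a pullback.
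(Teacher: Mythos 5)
Your two easy implications are fine and coincide with the paper's: (ii) $\Rightarrow$ (i) by specialising the cube to kernel pairs and quoting Theorem \ref{Goursat po}, and (iii) $\Rightarrow$ (ii) by factorising $\lambda=m\delta$ and applying (iii) to the resulting cube. The problem is that at both places where the Goursat hypothesis actually has to do work your text records an intention rather than a proof, and the tool you nominate, the Shifting Lemma, cannot do the first of these jobs at all. For (i) $\Rightarrow$ (ii) one must produce, for every $(u,v)\in U\times_W V$, an element of $X\times_Y Z$ mapping to it; this is an existential statement, whereas the Shifting Lemma only concludes that a pair of \emph{already given} elements lies in a given equivalence relation --- it never manufactures the missing preimage. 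You yourself flag this step as ``the main obstacle'' and then assert that the Shifting Lemma ``forces $M$ to be all of $U\times_W V$'' with no argument; that assertion is precisely the content of the theorem. The paper's proof instead constructs an auxiliary relation $R$ on $Y\times U\times V$ (whose pairs are witnessed by elements $\bar{x}\in X$, $\bar{z}\in Z$) and applies the relational characterisation of Theorem \ref{Goursat characterisations}(iii), $(1\wedge T)T^{\circ}(1\wedge T)\leqslant TT$: it is the composite $TT$ that supplies the intermediate triple $(y',u',v')$ and hence the required point of $X\times_Y Z$. Nothing in your sketch replaces this construction, and the Goursat-pushout facts you do invoke (the images of $\Eq(f)$ and $\Eq(l)$ along $\alpha$ and $\gamma$ are $\Eq(g)$ and $\Eq(h)$) are not sufficient: relationally one must show $\gamma\, l^{\circ}\,\Eq(\beta)\, f\,\alpha^{\circ}=\gamma\, l^{\circ} f\,\alpha^{\circ}$, and that is exactly the nontrivial point you have skipped.

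The second gap is in your route to (iii). Your plan (first get (ii), factor $\lambda=m\delta$, then prove $\Eq(\lambda)\leqslant\Eq(\delta)$ via the Shifting Lemma) is indeed the paper's plan for (ii) $\Rightarrow$ (iii), but the Shifting Lemma has a hypothesis: to run it with $S=\Eq(\pi_X)$, $R=\Eq(\lambda)$ and $T=\Eq(\delta)$ one first needs $\Eq(\pi_X)\wedge\Eq(\lambda)\leqslant\Eq(\delta)$, and this inequality is itself a genuine use of the Goursat property --- the paper proves it by introducing the relation $P=\Eq(\pi_X)\wedge\Eq(\delta)$, viewed as a relation from $X\times_Y Z$ to $Z$ via $\langle p_1,\pi_Z p_2\rangle$, and applying $PP^{\circ}PP^{\circ}=PP^{\circ}$ (Theorem \ref{Goursat characterisations}(iv)). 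Your sketch replaces this with the observation that restricting $\Eq(\lambda)$ and $\Eq(\delta)$ along the splittings $\langle 1_X,kf\rangle$ and $\langle il,1_Z\rangle$ recovers $\Eq(\alpha)$ and $\Eq(\gamma)$, which is true but is not the hypothesis the Shifting Lemma needs, so the lemma cannot be applied as stated. Until these two relational steps are actually carried out, the substantive directions (i) $\Rightarrow$ (ii) and (i)/(ii) $\Rightarrow$ (iii) remain unproved.
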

\begin{proof}
(i) $\Rightarrow$ (ii). Again, it suffices to give a proof in set-theoretical terms. Let $(u,v)\in U\times_W V$. Then there exist $x\in X$ and $z\in Z$ such that $\alpha(x)=u$ and $\gamma(z)=v$; thus $g\alpha(x)=h\gamma(z)$. We define a binary relation $R$ on $Y\times U\times V$ by: $$((y_1,u_1,v_1) , (y_2,u_2,v_2)) \in R$$ if $y_1=l(\bar{z}), u_1=\alpha(\bar{x}), y_2=f(\bar{x})$ and $v_2=\gamma(\bar{z})$, for some $\bar{x}$ and $\bar{z}$. Categorically, 
$R$ is defined as follows. Let $$A=(Y\times U\times V)\times (Y\times U\times V)\times (X\times Z)$$ and consider the following four equalisers, where $\pi_i$ denotes the $i$-th projection:
$$
\begin{array}{l}
    \xymatrix{E_1 \, \ar@{>->}[r]^{} & A \ar@<3pt>[r]^-{l\pi_8} \ar@<-3pt>[r]_-{\pi_1} & Y}  \vspace{3pt}\\
   \xymatrix{E_2 \, \ar@{>->}[r] & A \ar@<3pt>[r]^-{\alpha\pi_7} \ar@<-3pt>[r]_-{\pi_2} & U}  \vspace{3pt} \\
   \xymatrix{E_3 \,  \ar@{>->}[r] & A \ar@<3pt>[r]^-{f\pi_7} \ar@<-3pt>[r]_-{\pi_4} & Y}  \vspace{3pt} \\
   \xymatrix{E_4 \, \ar@{>->}[r] & A \ar@<3pt>[r]^-{\gamma\pi_8} \ar@<-3pt>[r]_-{\pi_6} & Y}
\end{array}
$$
Let $E=E_1\wedge E_2\wedge E_3\wedge E_4$ be their intersection (as subobjects of $A$), and define $R$ as the regular image given by
$$
\xymatrix@R=10pt@C=30pt{E \ar@{ >->}[r] \ar@{>>}[dr] & A \ar[r] & (Y\times U\times V)\times (Y\times U\times V) \\
    & R \ar@{ >->}[ur]}
$$
along the obvious projection $A\to (Y\times U\times V)\times (Y\times U\times V)$.

We have the following relations:\\
$\cdot$ $(f(x),\alpha(x), \gamma k f(x))\, R \,(f(x),\alpha(x), \gamma k f(x))$, for $\bar{x}=x$ and $\bar{z}=k f(x)$; \vspace{5pt}\\
$\cdot$ $(l(z), \alpha i f (x), \gamma(z))\, R \,(f(x),\alpha(x), \gamma k l(z))$, for $\bar{x}=i f (x)$ and  $\bar{z}=k l (z)$; \vspace{5pt} \\
$\cdot$ $(l(z), \alpha i l (z), \gamma(z))\, R \, (l(z), \alpha i l (z), \gamma(z))$, for $\bar{x}=i l (z)$ and $\bar{z}=z$.
\\
From $g\alpha(x)=h\gamma(z)$, we get that $\gamma k f(x)= \gamma k l(z)$ and $\alpha i f (x) = \alpha i l (z)$. So:
$$(f(x),\alpha(x), \gamma k f(x)) (1_{Y\times U \times V} \wedge R)R^{\circ} (1_{Y\times U \times V} \wedge R) (l(z), \alpha i l (z), \gamma(z)). $$
From Theorem~\ref{Goursat characterisations}(iii), we can conclude that
$$(f(x),\alpha(x), \gamma k f(x))\, RR \,(l(z), \alpha i l (z), \gamma(z)). $$
Then there exists $(y',u',v')$ such that 
$$(f(x),\alpha(x), \gamma k f(x)) R (y',u',v') R (l(z), \alpha i l (z), \gamma(z)).$$ From the definition of $R$ we can conclude that, for some $\bar{x},\bar{z}, \bar{\bar{x}}, \bar{\bar{z}}$, $l(\bar{z})=f(x)$, $\alpha(\bar{x})=\alpha(x)$, $f(\bar{x})=y'$, $\gamma(\bar{z})=v'$ and $l(\bar{\bar{z}})=y'$, $\alpha(\bar{\bar{x}})=u'$, $f(\bar{\bar{x}})=l(z)$, $\gamma(\bar{\bar{z}})=\gamma(z)$. Then, there exists $(\bar{x},\bar{\bar{z}})\in X\times _Y Z$, since $f(\bar{x})=y'=l(\bar{\bar{z}})$, such that $\lambda(\bar{x},\bar{\bar{z}})=(\alpha(\bar{x}), \gamma(\bar{\bar{z}}))=(\alpha(x),\gamma(z))=(u,v)$.

(ii) $\Rightarrow$ (i). We can consider the left and right faces in the cube~(\ref{cube}) to be kernel pairs of split epimorphisms. Then condition (ii) translates into the statement of Theorem~\ref{Goursat po}(ii).

(ii) $\Rightarrow$ (iii).
By assumption we know that the induced arrow $$\lambda \colon X \times_Y Z \to U \times_W V$$ in diagram \eqref{cube} is a regular epimorphism, and this implies that the unique induced arrow $c \colon A \to  U \times_W V$ such that $c \delta = \lambda$ (where $\delta$ is defined as in diagram \eqref{generalcube}) is a regular epimorphism as well. In order to show that $c$ is also a monomorphism it will suffice to show that $\Eq(\lambda) \leqslant \Eq(\delta)$, since one always has $\Eq(\delta) \leqslant \Eq(\lambda)$. \\
As a preliminary step, we first show that $\Eq(\pi_X) \wedge \Eq(\lambda) \leqslant \Eq(\delta)$. \\ For that, we define a new relation $P$ as follows:
if $$
\xymatrix@R=10pt@C=30pt{\Eq(\pi_X)\wedge \Eq(\delta) \ar@{ >->}[rr]^-{\langle p_1,p_2 \rangle} & & (X\times_Y Z)\times (X\times_Y Z)}
$$ is the intersection of the relations $\Eq(\pi_X)$ and $\Eq(\delta)$ on $X\times_Y Z$,
the relation $P$ is given by the monomorphism
$$
\xymatrix@R=10pt@C=30pt{P= \Eq(\pi_X)\wedge \Eq(\delta) \ar@{ >->}[rr]^-{\langle p_1,\pi_Z p_2 \rangle} & & (X\times_Y Z)\times Z.}
$$
In set-theoretical terms $P$ is the relation from $X \times_Y Z$ to $Z$ defined as follows: \\
\centerline{ $((a,b), c) \in P$ if and only if $((a,b) , (a,c)) \in \Eq(\delta)$,}
\noindent (remark that $(a,c) \in X \times_Y Z$ by definition).
As we have observed above, under the assumption (ii) $\mathcal C$ is necessarily a Goursat category, and then by applying Theorem~\ref{Goursat characterisations} to the relation $P$ we get the equality $PP^{\circ}PP^{\circ} = PP^{\circ}$.

Now, let $((x,z),(x,w))$ be an element in $\Eq(\pi_X) \wedge \Eq(\lambda)$. This implies that $f(x)= l(z)= l(w)$, and $(z,w) \in \Eq(\gamma)$; accordingly,
 $((il(z),z),(il(w),w))$ $\in \Eq(\delta)$. Then:\\
$\cdot$ $((x,z),z) \in P$; \\
$\cdot$ $((il(z), w),z) \in P$, since $l(z)=l(w)$ and $((il(z),z),(il(w),w)) \in \Eq(\delta)$, as observed above; \\
$\cdot$ $((il(w),w),w) \in P$;\\
$\cdot$ $((x,w),w) \in P$. \\
It follows that $((x,z), (x,w)) \in PP^{\circ}PP^{\circ} = PP^{\circ}$. There is then a $\theta$ such that $((x,z), \theta)\in P$ and $((x,w), \theta) \in P$. The fact that $\Eq(\delta)$ is an equivalence relation implies that $((x,z),(x, w)) \in \Eq(\delta)$, since $((x,z),(x, \theta)) \in \Eq(\delta)$ and $((x,w),(x, \theta)) \in \Eq(\delta)$. It follows that $\Eq(\pi_X) \wedge \Eq(\lambda) \leqslant \Eq(\delta)$.

Finally, to prove that $\Eq(\lambda) \leqslant \Eq(\delta)$, we
consider then an element $((x_1, z_1), (x_2, z_2)) \in \Eq(\lambda)$: this means that $f(x_1)= l(z_1)$, $f(x_2)= l(z_2)$ and $\lambda (x_1, z_1) = \lambda (x_2, z_2)$ (thus, in particular, $\alpha(x_1) = \alpha(x_2)$). We are going to show that $\delta( x_1,z_1) = \delta (x_2,z_2)$. To do so, we apply the Shifting Lemma to the following situation
\begin{equation}\label{shiftpart}
\xymatrix@C=70pt@R=30pt{(x_1,kf(x_1))  \ar@{-}[r]^-{\Eq(\pi_X)} \ar@{-}[d]^-{\Eq(\lambda)} \ar@(l,l)@{-}[d]_-{\Eq(\delta)} &
  (x_1,z_1)   \ar@{-}[d]_-{\Eq(\lambda)} \ar@(r,r)@{--}[d]^-{\Eq(\delta)} \\
    (x_2,kf(x_2))  \ar@{-}[r]_-{\Eq(\pi_X)} &(x_2,z_2) , }
    \end{equation}
    where the solid lines represent relations holding by assumption.
Note that all the elements $(x_1,z_1), (x_2,z_2), (x_1,kf(x_1)), (x_2,kf(x_2))$ are in $X\times_Y Z$. Since $\alpha(x_1)=\alpha(x_2)$, we have $$((x_1,kf(x_1)), (x_2,kf(x_2)))\in \Eq(\delta)$$ by the commutativity of the upward back face of \eqref{generalcube}; it follows that $((x_1,kf(x_1)), (x_2,kf(x_2)))\in \Eq(\lambda)$ since $\Eq(\delta)\leqslant \Eq(\lambda)$.
The inequality $\Eq(\pi_X) \wedge \Eq(\lambda) \leqslant \Eq(\delta)$ that we have already checked above
allows us to apply the Shifting Lemma in the situation described by the bold lines in \eqref{shiftpart}, so that the relation in the dashed line holds, i.e. $((x_1,z_1), (x_2,z_2))\in \Eq(\delta)$, as desired.

(iii) $\Rightarrow$ (ii). This implication easily follows by taking the (regular epimorphism, monomorphism) factorisation of the comparison morphism $\lambda$ given in diagram \eqref{cube}, say $\lambda=m\delta$. One then obtains a cube of the type \eqref{generalcube} which, by assumption, is such that the right face is a pullback. Consequently, $\lambda$ is isomorphic to the regular epimorphism $\delta$.
\end{proof}

In the last part of this section we give a characterisation of regular Goursat categories through the fibration of points. Thus, we add Goursat categories to the list of (many) algebraic categories characterised in these terms (see ~\cite{MCFPO, BB}).

A \emph{point} in a category $\mathcal C$ is a split epimorphism $f \colon X\rightarrow Y$ together with a fixed splitting $i \colon Y\rightarrow X$, usually depicted as $$\xymatrix@1{X \ar@<-2pt>[r]_-f & Y \ar@<-2pt>[l]_-i}.$$

The category of points in $\mathcal C$ is denoted by $\Pt (\CC)$. When $\CC$ has pullbacks of split epimorphisms, the functor sending a point to its codomain
$$
\begin{array}{rcl}
    \Pt (\CC) & \to & \CC \\
    \xymatrix@1{U \ar@<-2pt>[r]_-g & W \ar@<-2pt>[l]_-j} & \mapsto & W
\end{array}
$$
is a fibration, called the \emph{fibration of points} \cite{NEKEAC}. Given a morphism $\beta\colon Y \to W$, the change-of-base functor with respect to this fibration is denoted by $\beta^*\colon \Pt_W(\CC) \to \Pt_Y(\CC)$. If $\CC$ has, moreover, pushouts along split monomorphisms, then any pullback functor $\beta^*$ has a left adjoint
$$
\begin{array}{lrcl}
    \beta_!: & \Pt_Y(\CC) & \to & \Pt_W(\CC), \\
    & \xymatrix@1{X \ar@<-2pt>[r]_-f & Y \ar@<-2pt>[l]_-i} & \mapsto & \xymatrix@1{\beta_!(X) \ar@<-2pt>[r]_-{\beta_!(f)} & W \ar@<-2pt>[l]_-{\beta_!(i)}}
\end{array}
$$
where
$(\beta_!(X), \beta_!(f), \beta_!(i)) \in \Pt_W(\CC)$ is determined by the right hand part of the following pushout:
$$
\xymatrix{ X \ar[r]^-{\overline{\beta}} & \beta_!(X)\\
Y \ar[r]_{\beta} \ar[u]^{i}& {W.} \ar[u]_{\beta_!(i)}}
$$
Observe that the arrow $\overline{\beta}$ in the diagram above is a regular epimorphism whenever $\beta$ is a regular epimorphism.

\begin{theorem}
\label{corollary Goursat cube}
Let $\mathcal C$ be a regular category with pushouts along split mono-morphisms. Then the following conditions are equivalent:
\begin{enumerate}
\item[(i)] $\mathcal C$ is a Goursat category;
\item[(ii)] for any regular epimorphism $\beta \colon Y \to W$ in $\mathcal C$ the functor \\ $\beta_{!} \colon \mathsf{Pt}_Y (\mathcal C) \to \mathsf{Pt}_{W} (\mathcal C)$ preserves binary products;
\item[(iii)] for any commutative square $$\xymatrix{
  X \ar@{>>}[r]^-{\alpha} \ar@<-2pt>[d]_-f & U  \ar@<-2pt>[d]_-g \\
  Y \ar@{>>}[r]_-{\beta} \ar@<-2pt>[u]_-i & W \ar@<-2pt>[u]_-j }
$$
where $f$ and $g$ are split epimorphisms and $\alpha$ and $\beta$ are regular epimorphisms (commuting also with the splittings), the Beck-Chevalley condition holds: there is a functor isomorphism $\alpha_! f^* \cong g^*\beta_!$.
\end{enumerate}
\end{theorem}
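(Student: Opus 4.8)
The plan is to establish the cycle (i) $\Rightarrow$ (iii) $\Rightarrow$ (ii) $\Rightarrow$ (i), using Theorems~\ref{Goursat cube} and~\ref{Goursat po} as the main tools and, for the two more formal implications, the pasting lemma for pushouts together with the right-cancellation and pullback-stability of regular epimorphisms in a regular category.

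(i) $\Rightarrow$ (iii). Fix a square as in (iii) and a point $(Z,l,k) \in \Pt_Y(\CC)$, so $lk = 1_Y$. First I would form the point $f^*(Z,l,k) = (X\times_Y Z, \pi_X, \langle 1_X, kf\rangle)$ over $X$ and let $A$ be the total object of $\alpha_!(X\times_Y Z, \pi_X, \langle 1_X,kf\rangle)$, i.e. the pushout of $X\times_Y Z \xleftarrow{\langle 1_X,kf\rangle} X \xrightarrow{\alpha} U$; on the other side let $V$ be the total object of $\beta_!(Z,l,k)$, the pushout of $Z \xleftarrow{k} Y \xrightarrow{\beta} W$, with regular epimorphism $\gamma = \overline{\beta}\colon Z\to V$ and split epimorphism $h\colon V\to W$. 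Using the universal properties of these two pushouts and the hypothesis that $\alpha$ and $\beta$ commute with the splittings, one produces a commutative cube exactly of the shape~\eqref{generalcube}, with $\delta = \overline{\alpha}$, whose left face is the pullback of the split epimorphisms $f$ and $l$ and whose horizontal arrows $\alpha, \beta, \gamma, \delta$ are regular epimorphisms (here one uses that $\overline{\alpha}$ and $\overline{\beta}$ are regular epimorphisms since $\alpha$ and $\beta$ are). The delicate point is that the induced map $A \to V$ has to be a split epimorphism: its section $V\to A$ is the map induced, via the universal property of $V$, by the section $\langle il, 1_Z\rangle$ of $\pi_Z$ and the splitting $j$ of $g$, and its well-definedness rests precisely on the identity $\alpha i = j\beta$. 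By the implication (i) $\Rightarrow$ (iii) of Theorem~\ref{Goursat cube} the right face of the cube is then a pullback, i.e. the comparison morphism $A \to U\times_W V$ is an isomorphism; this says exactly that $\alpha_! f^*(Z,l,k) \cong g^*\beta_!(Z,l,k)$, and since this comparison is the canonical Beck--Chevalley morphism it is natural in $(Z,l,k)$, whence $\alpha_! f^* \cong g^*\beta_!$.

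(iii) $\Rightarrow$ (ii). Let $\beta\colon Y\to W$ be a regular epimorphism and $P_1 = (X,f,i)$, $P_2 = (Z,l,k)$ points over $Y$. Applying the Beck--Chevalley condition to the pushout square
$$\xymatrix{
  X \ar@{>>}[r]^-{\overline{\beta}} \ar@<-2pt>[d]_-f & \beta_!(X) \ar@<-2pt>[d]_-{\beta_!(f)} \\
  Y \ar@{>>}[r]_-{\beta} \ar@<-2pt>[u]_-i & W \ar@<-2pt>[u]_-{\beta_!(i)} }$$
(which is of the required form, $\overline{\beta}$ being a regular epimorphism and $\beta_!(f)$ a split epimorphism) gives $(\overline{\beta})_! f^* \cong (\beta_!(f))^* \beta_!$ as functors $\Pt_Y(\CC) \to \Pt_{\beta_!(X)}(\CC)$. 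Evaluating at $P_2$: the right-hand side underlies the product $\beta_!(P_1)\times\beta_!(P_2)$ in $\Pt_W(\CC)$, while the left-hand side is the pushout of $X\times_Y Z \xleftarrow{\langle 1_X,kf\rangle} X \xrightarrow{\overline{\beta}} \beta_!(X)$; since $\beta_!(X)$ is itself the pushout of $X \xleftarrow{i} Y \xrightarrow{\beta} W$ and $\langle 1_X,kf\rangle\circ i = \langle i,k\rangle$ is the splitting of the product point $P_1\times P_2$, the pasting lemma for pushouts identifies the left-hand side with the pushout of $X\times_Y Z \xleftarrow{\langle i,k\rangle} Y \xrightarrow{\beta} W$, that is, with the total object of $\beta_!(P_1\times P_2)$. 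Tracing through these identifications (naturality of the pushout projections) one checks that the resulting isomorphism is the canonical comparison $\beta_!(P_1\times P_2) \to \beta_!(P_1)\times\beta_!(P_2)$, so $\beta_!$ preserves binary products; it clearly preserves the terminal object $(Y,1_Y,1_Y)$.

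(ii) $\Rightarrow$ (i). I would verify condition (ii) of Theorem~\ref{Goursat po}. Given a square as there, the universal property of $\beta_!(X)$ together with $g\alpha = \beta f$ and $\alpha i = j\beta$ yields a morphism $\phi\colon \beta_!(X,f,i) \to (U,g,j)$ in $\Pt_W(\CC)$ with $\phi\,\overline{\beta} = \alpha$; since $\alpha$ is a regular epimorphism and $\overline{\beta}$ an epimorphism, right-cancellation makes $\phi$ a regular epimorphism, hence $\phi\times_W\phi\colon \beta_!(X)\times_W\beta_!(X) \to U\times_W U = \Eq(g)$ is a regular epimorphism by pullback-stability of regular epimorphisms. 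Now the comparison morphism $\lambda\colon \Eq(f) = X\times_Y X \to \Eq(g)$ factors as
$$X\times_Y X \xrightarrow{\ \overline{\beta}\ } \beta_!(X\times_Y X) \xrightarrow{\ \cong\ } \beta_!(X)\times_W\beta_!(X) \xrightarrow{\ \phi\times_W\phi\ } \Eq(g),$$
the first arrow being the (regular epi) pushout leg and the middle one the isomorphism provided by (ii) applied to $\Eq(f) = (X,f,i)\times(X,f,i)$. Hence $\lambda$ is a regular epimorphism and $\CC$ is Goursat. The main obstacle of the whole argument is the first implication: one must assemble the cube~\eqref{generalcube} so that all its faces commute and, crucially, so that $A \to V$ is a split epimorphism (this is exactly where the hypothesis that $\alpha$ and $\beta$ commute with the splittings enters) — after which Theorem~\ref{Goursat cube} does the real work, and the remaining two implications are essentially formal.
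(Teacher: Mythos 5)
Your proposal is correct and takes essentially the same route as the paper: the same cycle (i) $\Rightarrow$ (iii) $\Rightarrow$ (ii) $\Rightarrow$ (i), with (i) $\Rightarrow$ (iii) obtained by assembling the pushouts $\alpha_!f^*(Z,l,k)$ and $\beta_!(Z,l,k)$ into a cube of type \eqref{generalcube} and invoking Theorem~\ref{Goursat cube}(iii), and (iii) $\Rightarrow$ (ii) obtained by applying the Beck--Chevalley condition to the square formed by $\overline{\beta}\colon X \to \beta_!(X)$ and $\beta$, your explicit pushout-pasting identification $(\overline{\beta})_!f^*(Z,l,k) \cong \beta_!\bigl((X,f,i)\times(Z,l,k)\bigr)$ merely spelling out what the paper leaves implicit in its diagram \eqref{imagebeta}. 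The only minor deviation is in (ii) $\Rightarrow$ (i), where you verify the kernel-pair condition of Theorem~\ref{Goursat po} by factoring $\lambda\colon \Eq(f) \to \Eq(g)$ through $\beta_!(\Eq(f)) \cong \beta_!(X)\times_W\beta_!(X)$, whereas the paper verifies the general cube condition of Theorem~\ref{Goursat cube}(ii) via the analogous factorisation $\lambda = \phi\,\overline{\beta}$ in its diagram \eqref{comparison}; your argument is just the special case $Z=X$, $\gamma=\alpha$ of the paper's and is equally valid.
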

\begin{proof}
(i) $\Rightarrow$ (iii).
Given a point $(Z,l,k)$ over $Y$ consider the pullback defining $f^*(Z,l,k)$:
\begin{equation}\label{pullback}
\vcenter{\xymatrix@=25pt{
  {X \times_Y Z} \ar@<-2pt>[r]_-{{\pi}_Z} \ar@<-2pt>[d]_-{\pi_X} & Z  \ar@<-2pt>[d]_-l  \ar@<-2pt>[l] \\
  X \ar@<-2pt>[r]_-{f} \ar@<-2pt>[u] & {Y.} \ar@<-2pt>[u]_-k \ar@<-2pt>[l]_{i} }}
\end{equation}
The following commutative cube is obtained by applying the functor $\alpha_!$ and $\beta_!$ to the points $f^*(Z,l,k)$ (over $X$) and $(Z,l,k)$ (over $Y$), respectively:
$$
\xymatrix@C=35pt@R=20pt{
    X \times_Y Z \ar@<-2pt>[dd]_(.6){\pi_X} \ar@<-2pt>[dr]_-{\pi_Z} \ar@{>>}[rr]^-{\overline{\alpha}} & & \alpha_!(X\times_Y Z) \ar@<-2pt>@{-->}[dd] \ar@<-2pt>[dr] \\
    & Z \ar@<-2pt>[dd]_(.7){l} \ar@<-2pt>[ul] \ar@{>>}[rr]^(.3){\overline{\beta}} & & \beta_!(Z) \ar@<-2pt>[dd] \ar@<-2pt>[ul] \\
    X \ar@<-2pt>[uu] \ar@<-2pt>[dr]_-{f} \ar@{-->>}[rr]^(.7){\alpha} & & U \ar@<-2pt>@{-->}[uu] \ar@<-2pt>@{-->}[dr]_-g \\
    & Y \ar@<-2pt>[uu]_(.3){k} \ar@<-2pt>[ul]_-{i} \ar@{>>}[rr]_-{\beta} & & W. \ar@<-2pt>[uu]\ar@<-2pt>@{-->}[ul]_-j }
$$
This diagram is of the form (\ref{generalcube}). Consequently, the right face is a pullback by Theorem~\ref{Goursat cube}(iii), so that $g^*\beta_!\cong \alpha_! f^*$.

(iii) $\Rightarrow$ (ii). Let $(X,f,i)$ and $(Z,l,k)$ be points over $Y$ and consider their product in $\Pt_Y(\CC)$, which is given by the pullback \eqref{pullback}. We take its image through the functor $\beta_! \colon \Pt_Y(\CC) \rightarrow \Pt_W(\CC)$:
\begin{equation}\label{imagebeta}
\vcenter{\xymatrix@C=35pt@R=20pt{
    X \times_Y Z \ar@<-2pt>[dd]_(.6){\pi_X} \ar@<-2pt>[dr]_-{\pi_Z} \ar@{>>}[rr]^-{\overline{\beta}} & & \beta_!(X\times_Y Z) \ar@<-2pt>@{-->}[dd] \ar@<-2pt>[dr] \\
    & Z \ar@<-2pt>[dd]_(.7){l} \ar@<-2pt>[ul] \ar@{>>}[rr] & & \beta_!(Z) \ar@<-2pt>[dd] \ar@<-2pt>[ul] \\
    X \ar@<-2pt>[uu] \ar@<-2pt>[dr]_-{f} \ar@{-->>}[rr] & & \beta_!(X) \ar@<-2pt>@{-->}[uu] \ar@<-2pt>@{-->}[dr] \\
    & Y \ar@<-2pt>[uu]_(.3){k} \ar@<-2pt>[ul]_-{i} \ar@{>>}[rr]_-{\beta} & & W. \ar@<-2pt>[uu]\ar@<-2pt>@{-->}[ul] }}
\end{equation}
By applying the assumption to the bottom commutative face, we conclude that the right face is a pullback, i.e. $\beta_!$ preserves binary products.

(ii) $\Rightarrow$ (i). Consider the diagram \eqref{cube}, where $\alpha, \beta, \gamma$ are assumed to be regular epimorphisms, and let us show that the induced arrow $\lambda$ is also a regular epimorphism.
The image of the points over $Y$ of the left face of \eqref{cube} by the functor $\beta_{!}$ determines the commutative diagram \eqref{imagebeta}. One then obtains the following commutative diagram
\begin{equation}\label{comparison}
\vcenter{\xymatrix@C=35pt@R=20pt{
   \beta_{!} ( X \times_Y Z ) \ar@<-2pt>[dd] \ar@<-2pt>[dr] \ar@{-->}[rr]^-{\phi} & &  U \times_W V  \ar@<-2pt>@{-->}[dd] \ar@<-2pt>[dr] \\
    & \beta_{!}(Z) \ar@<-2pt>[dd] \ar@<-2pt>[ul]\ar@{>>}[rr]^(.3){\tau} & & V \ar@<-2pt>[dd]_(.4){} \ar@<-2pt>[ul] \\
    \beta_{!}(X) \ar@<-2pt>[uu] \ar@<-2pt>[dr] \ar@{-->>}[rr]^(.7){\sigma} & &  U  \ar@<-2pt>@{-->}[uu] \ar@<-2pt>@{-->}[dr] \\
    & W \ar@<-2pt>[uu] \ar@<-2pt>[ul] \ar@{=}[rr]_-{1_W} & & W, \ar@<-2pt>[uu]\ar@<-2pt>@{-->}[ul]}}
\end{equation}
where the arrows $\sigma, \tau$ and $\phi$ are induced by the universal properties of the pushouts defining $\beta_{!}(X), \beta_{!}(Z)$ and $\beta_{!} ( X \times_Y Z )$, respectively, and $\phi \overline{\beta} = \lambda$. The fact that $\alpha$ and $\gamma$ are regular epimorphisms implies that $\sigma$ and $\tau$ are regular epimorphisms, while the assumption guarantees that the left face in the diagram \eqref{comparison} is a pullback in $\mathcal C$. It then follows that the induced arrow $\phi$ is a regular epimorphism as well (as a product of the regular epimorphisms $\sigma$ and $\tau$ in $\Pt_W(\CC)$), and so is then the arrow $\lambda \colon X \times_Y Z \to U \times_W V $. This shows that condition (ii) in Theorem \ref{Goursat cube} is satisfied, and $\mathcal C$ is then a Goursat category, as desired.
\end{proof}

\begin{remark}\label{ternary}  A variety of universal algebras is $3$-permutable when its algebraic theory has two ternary operations $r$ and $s$ such that the identities
$r(x,y,y)=x$, $r(x,x,y)=s(x,y,y)$ and $s(x,x,y)=y$ hold \cite{Hagemann-Mitschke}. We can prove the existence of such ternary operations by applying the property stated in Theorem~\ref{Goursat cube}(ii) to the following commutative cube
$$
\xymatrix@C=60pt@R=35pt{
    P \ar@<-2pt>[dd] \ar@<-2pt>[dr] \ar@{.>}[rr]^-{\lambda} & & \Eq(\nabla) \ar@<-2pt>@{-->}[dd] \ar@<-2pt>[dr] \\
    & 3X \ar@<-2pt>[dd]_(.3){1+\nabla} \ar@<-2pt>[ul] \ar@{>>}[rr]^(.2){\nabla+1} & & 2X \ar@<-2pt>[dd]_-{\nabla} \ar@<-2pt>[ul] \\
    3X \ar@<-2pt>[uu] \ar@<-2pt>[dr]_-{\nabla+1} \ar@{-->>}[rr]_(.8){1+\nabla} & & 2X \ar@<-2pt>@{-->}[uu] \ar@<-2pt>@{-->}[dr]_-{\nabla} \\
    & 2X \ar@<-2pt>[uu]_(.7){{\left\lgroup i_1 \; i_2 \right\rgroup}} \ar@<-2pt>[ul]_(.45){{\left\lgroup i_2 \; i_3 \right\rgroup}} \ar@{>>}[rr]_-{\nabla} & & X, \ar@<-2pt>[uu]_-{i_1} \ar@<-2pt>@{-->}[ul]_-{i_2} }
$$
where $X$ is the free algebra on one element, $kX$ denotes a $k$-indexed copower of $X$ and $i_j$ the canonical $j$-th injection, $P$ denotes the object part of the pullback defining the left face and  $\nabla = {\left\lgroup 1_X \;\; 1_X \right\rgroup} \colon 2X \to X$ is the codiagonal. By Theorem \ref{Goursat cube}(ii), the comparison morphism $\lambda$ is a surjective homomorphism. The terms $p_1(x,y)=x$ and $p_2(x,y)=y$ are such that $(p_1,p_2)\in \Eq(\nabla)$. Since $\lambda$ is surjective, there exist ternary terms, say $r$ and $s$, such that:\\
\begin{tabular}{l}
  $(r,s)\in P$, from which we deduce that $r(x,x,y)=s(x,y,y)$, and \\
  $\lambda(r,s)=(p_1,p_2)$, which gives $r(x,y,y)=x$ and $s(x,x,y)=y$.
\end{tabular}
\end{remark}

%%%%%%%%%%%%%%%%%%%%%%%%%%%%%%%%%%%%%%%  SECTION: REFLECTIONS FOR GOURSAT CATEGORIES  %%%%%%%%%%%%%%%%%%%%%%%%%%%%%%%%%
\section{Reflections for Goursat categories}
In this section we show that any (regular epi)-reflective subcategory $\XX$ of a regular Goursat category $\CC$ has the property that the reflector $I \colon \CC \to \XX$ preserves pullbacks of split epimorphisms along split epimorphisms, extending the same result which is known for regular Ma'tsev categories (Proposition $3.6$ in \cite{BR}). Consequently, every internal Galois pregroupoid of an extension is an internal groupoid.

\begin{proposition}
\label{I preserves pbs}
Consider a (regular epi)-reflective subcategory $\XX$ of a regular Goursat category $\CC$
$$
 \xymatrix@1{\CC \ar@<1ex>[r]^I \ar@{}[r]|\bot & \;\XX, \ar@<1ex>@{_(->}[l]^U}
$$
where $U \colon \XX \to \CC$ is a full inclusion. Then the reflector $I \colon \CC \to \XX$ preserves pullbacks of pairs of split epimorphisms.
\end{proposition}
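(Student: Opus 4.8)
The plan is to deduce the statement from the characterisation of Goursat categories given in Theorem~\ref{Goursat cube}(iii), applied to the cube obtained by transporting a given pullback of split epimorphisms along the unit of the adjunction $I\dashv U$.

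More precisely, I would start with the projections $\pi_X\colon X\times_Y Z\to X$ and $\pi_Z\colon X\times_Y Z\to Z$ of a pullback in $\CC$ of two split epimorphisms $f\colon X\to Y$ and $l\colon Z\to Y$ with splittings $i$ and $k$; recall that then $\pi_X$ is split by $\langle 1_X,kf\rangle$ and $\pi_Z$ by $\langle il,1_Z\rangle$, so that this square is a pullback of split epimorphisms in the sense of diagram~\eqref{generalcube}. Write $\eta$ for the unit of $I\dashv U$. Since $\XX$ is \emph{(regular epi)-reflective}, each component $\eta_C\colon C\to UI(C)$ is a regular epimorphism; and since every functor preserves split epimorphisms, applying $UI$ to the pullback square produces a commutative square of split epimorphisms in $\CC$. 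By naturality of $\eta$ these data assemble into a commutative cube of the shape~\eqref{generalcube} whose left face is the original pullback of split epimorphisms, whose right face is the $UI$-image square, and whose four horizontal arrows $\eta_X,\eta_Y,\eta_Z,\eta_{X\times_Y Z}$ are regular epimorphisms commuting with the splittings. Theorem~\ref{Goursat cube}(iii) then yields that the right face is a pullback in $\CC$.

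It then remains to translate this back into a statement about $I$: as $U$ is fully faithful and preserves limits, the pullback of $I(f)$ and $I(l)$ formed in $\XX$ is carried by $U$ to the pullback of $UI(f)$ and $UI(l)$ in $\CC$; hence the canonical comparison $I(X\times_Y Z)\to I(X)\times_{I(Y)}I(Z)$ is an isomorphism in $\CC$, and therefore in $\XX$. That is, $I$ preserves the given pullback.

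The argument is short once Theorem~\ref{Goursat cube}(iii) is available, so the ``main obstacle'' is really only careful bookkeeping: one must check that the naturality squares of $\eta$ are compatible with the chosen splittings appearing in the right face (which is exactly naturality of $\eta$ evaluated on the splitting morphisms $i$, $k$, $\langle 1_X,kf\rangle$, $\langle il,1_Z\rangle$), and that ``the right face of the cube is a pullback in $\CC$'' really does imply ``$I$ preserves the pullback'' — this last point uses full faithfulness of $U$ together with the standard fact that a reflective subcategory is closed under those limits that exist in the ambient category. I do not expect any genuinely hard step beyond invoking Theorem~\ref{Goursat cube}(iii).
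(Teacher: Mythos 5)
Your proposal is correct and follows essentially the same route as the paper: form the cube whose horizontal edges are the unit components $\eta_X,\eta_Y,\eta_Z,\eta_{X\times_Y Z}$ (regular epimorphisms by (regular epi)-reflectivity), apply Theorem~\ref{Goursat cube}(iii) to conclude the right face is a pullback, and then use that the full inclusion $U$ preserves and reflects pullbacks to transfer the conclusion to $I$. The extra bookkeeping you mention (compatibility of the naturality squares with the splittings) is exactly what the paper's proof implicitly uses, so there is no gap.
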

\begin{proof}
Consider the following commutative diagram where the left face is a pullback of split epimorphisms, the right face is its image through the functor $UI \colon \CC \to \CC$ and $\eta$ denotes the unit of the adjunction:
$$\xymatrix@C=30pt@R=20pt{
    X \times_Y Z \ar@<-2pt>[dd]_-{\pi_X} \ar@<-2pt>[dr]_(.6){\pi_Z} \ar@{>>}[rr]^-{\eta_{X\times_Y Z}} & & UI(X \times_Y Z) \ar@<-2pt>@{-->}[dd] \ar@<-2pt>[dr] \\
    & Z \ar@<-2pt>[dd]_(.7){l} \ar@<-2pt>[ul] \ar@{>>}[rr]^(.2){\eta_Z} & & UI(Z) \ar@<-2pt>[dd] \ar@<-2pt>[ul] \\
    X \ar@<-2pt>[uu] \ar@<-2pt>[dr]_-{f} \ar@{-->>}[rr]^(.7){\eta_X} & & UI(X) \ar@<-2pt>@{-->}[uu] \ar@<-2pt>@{-->}[dr] \\
    & Y \ar@<-2pt>[uu]_(.3){k} \ar@<-2pt>[ul]_-{i} \ar@{>>}[rr]_-{\eta_Y} & & UI(Y). \ar@<-2pt>[uu]\ar@<-2pt>@{-->}[ul] }
$$
This commutative diagram verifies the conditions of the commutative diagram \eqref{generalcube} in Theorem \ref{Goursat cube}. It follows that the right face is a pullback, since $\mathcal C$ is a Goursat category. But $U \colon \XX \to \CC$ is a full inclusion that preserves and reflects pullbacks, and this completes the proof.
\end{proof}

We follow~\cite{JK} in calling a (regular epi)-reflective subcategory $\XX$ of an exact category $\CC$ a \emph{Birkhoff} subcategory if, moreover, $\XX$ is closed in $\CC$ under regular quotients.

\begin{corollary}
\label{I preserves pbs 2}
Consider a Birkhoff subcategory $\XX$ of an exact Goursat category $\CC$
$$
 \xymatrix@1{\CC \ar@<1ex>[r]^I \ar@{}[r]|\bot & \;\XX, \ar@<1ex>@{_(->}[l]^U}
$$
where $U \colon \XX \to \CC$ is a full inclusion. Then the reflector $I \colon \CC \to \XX$ preserves pullbacks of split epimorphisms along regular epimorphisms.
\end{corollary}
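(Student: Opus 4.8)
The plan is to prove the more precise statement that the canonical comparison
$$
\phi\colon I(X\times_Y Z)\longrightarrow IX\times_{IY}IZ
$$
is an isomorphism, where $X\times_Y Z$ is the pullback of a split epimorphism $l\colon Z\to Y$ (with section $s$) along a regular epimorphism $f\colon X\to Y$, and the pullback on the right is formed in $\XX$ — equivalently in $\CC$, since $U$ preserves and reflects pullbacks. Note that then the projection $p_X\colon X\times_Y Z\to X$ is a split epimorphism and $p_Z\colon X\times_Y Z\to Z$ is a regular epimorphism. Since $U$ reflects isomorphisms it suffices to argue in $\CC$, and, since a regular epimorphism which is a monomorphism is an isomorphism in a regular category, I would check separately that $\phi$ is a regular epimorphism and that it is a monomorphism.

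\emph{$\phi$ is a regular epimorphism.} Here I would use the standard ``Birkhoff condition'': as $\XX$ is a Birkhoff subcategory of the exact category $\CC$, for every regular epimorphism $g\colon A\to B$ the naturality square of the unit $\eta$ at $g$ is a pushout (this uses the exactness of $\CC$ and the closure of $\XX$ under regular quotients), and, $\CC$ being a regular Goursat category, the comparison $A\to B\times_{IB}IA$ is a regular epimorphism. Applying this to the regular epimorphisms $f$ and $l$ gives regular epimorphisms $X\twoheadrightarrow Y\times_{IY}IX$ and $Z\twoheadrightarrow Y\times_{IY}IZ$ over $Y$. Pulling the latter back along $f$ yields a regular epimorphism $X\times_Y Z\twoheadrightarrow X\times_{IY}IZ$ (the pullback of $IZ\to IY$ along $X\xrightarrow{f}Y\xrightarrow{\eta_Y}IY$), and pulling the former back along the induced projection $X\times_{IY}IZ\to Y$ yields a regular epimorphism onto $IX\times_{IY}IZ$; the composite is exactly $\langle\eta_X p_X,\eta_Z p_Z\rangle=\phi\circ\eta_{X\times_Y Z}$, so $\phi$ is a regular epimorphism.

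\emph{$\phi$ is a monomorphism.} Pulling back along the regular epimorphism $\eta_{X\times_Y Z}$, along which the preimage map on equivalence relations is an order-embedding, this amounts to the equality $\Eq(\eta_X p_X)\wedge\Eq(\eta_Z p_Z)=\Eq(\eta_{X\times_Y Z})$ in the lattice of equivalence relations on $X\times_Y Z$. The inequality $\geqslant$ is clear, since $\eta_X p_X$ and $\eta_Z p_Z$ factor through $\eta_{X\times_Y Z}$ (their codomains lying in $\XX$). For $\leqslant$ I would imitate the proof of Theorem~\ref{Goursat cube}: first, applying the relational characterisation of Goursat categories of Theorem~\ref{Goursat characterisations} to a suitable relation associated with $p_Z$ and $\eta_{X\times_Y Z}$ — as in the argument for (ii)$\Rightarrow$(iii) there, and using that $l$ is a split epimorphism — establish the weaker inequality $\Eq(p_X)\wedge\Eq(\eta_Z p_Z)\leqslant\Eq(\eta_{X\times_Y Z})$; then apply the Shifting Lemma to the configuration with vertices $\langle 1_X,sf\rangle(x_1)$, $(x_1,z_1)$, $\langle 1_X,sf\rangle(x_2)$, $(x_2,z_2)$, for $((x_1,z_1),(x_2,z_2))\in\Eq(\eta_X p_X)\wedge\Eq(\eta_Z p_Z)$, with $S=\Eq(p_X)$, $R=\Eq(\eta_X p_X)\wedge\Eq(\eta_Z p_Z)$ and $T=\Eq(\eta_{X\times_Y Z})$. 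The condition $R\wedge S\leqslant T$ is exactly the weaker inequality (as $\Eq(p_X)\leqslant\Eq(\eta_X p_X)$), and the incidences required hold because $\eta_{X\times_Y Z}\circ\langle 1_X,sf\rangle$ and $\eta_Z\circ sf$ factor through $\eta_X$; the conclusion yields $((x_1,z_1),(x_2,z_2))\in\Eq(\eta_{X\times_Y Z})$, i.e. the inequality $\leqslant$.

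The main obstacle is this last step, and precisely the weaker inequality $\Eq(p_X)\wedge\Eq(\eta_Z p_Z)\leqslant\Eq(\eta_{X\times_Y Z})$, which is where the Goursat hypothesis enters (through Theorem~\ref{Goursat characterisations}) and where the splitting of $l$ is used; the subsequent bootstrapping via the Shifting Lemma is routine. Exactness of $\CC$, for its part, is what is needed in the first step, so that the unit naturality square at a regular epimorphism is a pushout. Once $\phi$ is known to be an isomorphism in $\CC$, the image under $I$ of the given pullback is a pullback in $\CC$, hence in $\XX$ since $U$ preserves and reflects pullbacks, which is the assertion of the corollary.
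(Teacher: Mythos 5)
Your strategy---proving that the comparison $\phi\colon I(X\times_Y Z)\to IX\times_{IY}IZ$ is at once a regular epimorphism and a monomorphism---is genuinely different from the paper's proof, which instead reduces to Proposition~\ref{I preserves pbs} applied to the kernel pairs of the two horizontal regular epimorphisms and then uses exactness and the Birkhoff condition to identify $\eta$-images of kernel pairs with kernel pairs of the images, a pullback-cancellation result for Gumm categories and the Barr--Kock theorem. Unfortunately both halves of your argument contain a genuine gap. For the regular epimorphism part you assert that, $\CC$ being Goursat, the unit naturality square at a regular epimorphism $g\colon A\to B$, being a pushout, has a regular epimorphic comparison $A\to B\times_{IB}IA$. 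This implication is a Mal'tsev-level property, not a Goursat one: in the Goursat setting a pushout of this kind only yields the weaker ``Goursat pushout'' conclusion of Theorem~\ref{Goursat po} (the induced morphism between kernel pairs is a regular epimorphism), and the stronger statement you invoke is precisely what distinguishes Mal'tsev from Goursat categories (cf.\ Proposition~3.6 of \cite{GRCuboid}). In varietal terms your claim amounts to the $2$-permutability $\Eq(g)\vee\Eq(\eta_A)=\Eq(g)\circ\Eq(\eta_A)$, which $3$-permutability does not provide; since your argument uses nothing about the Birkhoff structure beyond the square being a pushout, the step is unjustified---and note that you need it even for the split epimorphism $l$, where it is again the Mal'tsev, not the Goursat, property.

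For the monomorphism part, the Shifting Lemma bootstrap is correctly set up, but the crucial ``weaker inequality'' $\Eq(p_X)\wedge\Eq(\eta_Z p_Z)\leqslant\Eq(\eta_{X\times_Y Z})$ cannot be obtained ``as in the argument for (ii)$\Rightarrow$(iii)'' of Theorem~\ref{Goursat cube}: that preliminary step in the paper uses the splitting $i$ of $f$, through the section $\langle il,1_Z\rangle$ of $\pi_Z$ and the resulting membership $((il(z),z),(il(w),w))\in\Eq(\delta)$, i.e.\ exactly the splitting you do not have, whereas the splitting of $l$ only provides the section $\langle 1_X,sf\rangle$ of $p_X$, which is the one consumed later by the Shifting Lemma. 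With $f$ merely a regular epimorphism there is no canonical element of the fibre playing the role of $il(z)$, so the four memberships in the relation $P$ cannot be produced, and this is precisely where the real difficulty of the corollary lies---it is the reason the paper passes to kernel pairs and brings in exactness and closure of $\XX$ under regular quotients, neither of which your monomorphism argument ever uses. As it stands, neither half of the proposal is established.
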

\begin{proof}
Let \begin{equation}\label{splitpullback}
\vcenter{\xymatrix{
  P \ar@{>>}[r]^-{p_2} \ar@<-2pt>[d]_-{p_1} & C  \ar@<-2pt>[d]_-g \\
  A \ar@{>>}[r]_-p \ar@<-2pt>[u] & B \ar@<-2pt>[u] }}
\end{equation}
be a pullback of a split epimorphism $g$ along a regular epimorphism $p$. By taking the kernel pairs $\Eq(p_2)$ of $p_2$ and $\Eq(p)$ of $p$ and then applying $UI$ one gets the commutative diagram
\begin{equation}\label{leftpullbacks}
\vcenter{\xymatrix@=45pt{
UI(\Eq(p_2)) \ar@<-2pt>[r] \ar@<2pt>[r]  \ar@<-2pt>[d]_-{} &   UI(P) \ar@{>>}[r]^-{UI(p_2)} \ar@<-2pt>[d]_-{UI(p_1)} \ar@{}[dr]|-{(\mathrm A)}  & UI(C)  \ar@<-2pt>[d]_-{UI(g)} \\
UI(\Eq(p)) \ar@<-2pt>[r] \ar@<2pt>[r] \ar@<-2pt>[u]  &  UI(A) \ar@{>>}[r]_-{UI(p)} \ar@<-2pt>[u] & UI(B). \ar@<-2pt>[u] }}
\end{equation}
The two (downward oriented) commutative squares on the left are pullbacks, since $UI$ preserves pullbacks of split epimorphisms along split epimorphisms. It suffices to prove that $(\mathrm{A})$ is a pullback, since $U$ reflects pullbacks. Now, by taking the regular image of $\Eq(p_2)$ along the regular epimorphism $\eta_P$ one gets an effective equivalence relation $\eta_P(\Eq(p_2))$ on $UI(P)$, since $\CC$ is an exact Goursat category. Moreover, $UI(p_2)$ is its coequaliser in $\CC$, since $\XX$ is stable in $\CC$ under regular quotients. Accordingly, $\eta_P(\Eq(p_2))\cong \Eq(UI(p_2))$.
Similarly, $\eta_A (\Eq(p))\cong \Eq(UI(p))$ yielding the following commutative diagram:
$$\xymatrix@=45pt{
UI(\Eq(p_2)) \ar@{.>>}[r]   \ar@<-2pt>[d]_-{} &\Eq(UI(p_2))  \ar@<-2pt>[d] \ar@<-2pt> [r]   \ar@<2pt> [r] \ar@{}[dr]|-{(\mathrm B)} &  UI(P)   \ar@<-2pt>[d]_-{UI(p_1)}  \\
UI(\Eq(p))  \ar@{.>>}[r]  \ar@<-2pt>[u]  &  \Eq(UI(p))   \ar@<-2pt> [r]   \ar@<2pt> [r] \ar@<-2pt>[u] & UI(A). \ar@<-2pt>[u] }
$$
The exterior rectangles are the left pullbacks in diagram \eqref{leftpullbacks} and the dotted arrows are regular epimorphisms by construction. By applying Proposition $4.1$ in \cite{GR3} we conclude that the squares $(\mathrm{B})$ are pullbacks. By the usually called Barr-Kock theorem \cite{EC} we conclude that $(\mathrm{A})$ is a pullback, as desired.
\end{proof}

We finally observe that the so-called internal Galois pregroupoids associated to an extension are always internal groupoids in the Goursat context.

First recall from \cite{GJ} that an {\em internal precategory} in a category $\mathcal C$  is a diagram of the form
$$\xymatrix@=28pt{{P_{2}} \ar@<2ex>[r]^{p_{1}}
\ar@<-2ex>[r]^{p_{2}}
  \ar@<0ex>[r]^{m} &
 P_{1} \ar@<2ex>[r]^{d_{1}} \ar@<-2ex>[r]^{d_{2}} & P_{0} \ar@<0ex>[l]_{s} }$$
 with
\begin{enumerate}
 \item $d_{1}  s = 1_{P_0} = d_{2}  s$;
 \item $d_2  p_1 = d_1 p_2$;
\item $d_{1}  {p}_1 = d_{1}  m$, $ d_{2}  {p}_2 =
d_{2}  m $.
\end{enumerate}
In other words, an internal precategory in $\mathcal C$ can be seen as what remains of the definition of an internal category in $\mathcal C$ when one cancels all references to pullbacks.

Every extension (= regular epimorphism) $f \colon A \to B$ in $\mathcal C$ induces the internal groupoid given by the equivalence relation determined by the kernel pair of $f$:
$$\xymatrix@=28pt{\Eq(f) \times_A  \Eq(f) \ar@<2ex>[r]^-{p_{1}}
\ar@<-2ex>[r]^-{p_{2}}
  \ar@<0ex>[r]^-{m} &
 \Eq(f) \ar@<2ex>[r]^-{d_{1}} \ar@<-2ex>[r]^-{d_{2}} & A. \ar@<0ex>[l]_-{s} }
\vspace{5pt}
$$
By applying the left adjoint $I \colon \mathcal C \to {\mathcal X}$ of the inclusion functor $U \colon {\mathcal X} \to \mathcal C $ to this equivalence relation one always obtains an internal precategory in $\mathcal X$:
$$
\xymatrix@=30pt{I(\Eq(f) \times_A  \Eq(f) ) \ar@<3ex>[r]^-{I (p_{1})}
\ar@<-3ex>[r]^-{I(p_{2})}
  \ar@<0ex>[r]^-{I(m)} &
 I(\Eq(f)) \ar@<3ex>[r]^-{I(d_{1})} \ar@<-3ex>[r]^-{I(d_{2})} & I(A). \ar@<0ex>[l]_-{I(s)} }
$$
This special kind of internal precategory is called the {\em internal Galois pregroupoid of $f$} (see
\cite{GJ} for more details), and is denoted by $\mathsf{Gal}(f)$. It turns out to always be an internal groupoid in the Goursat context:
\begin{corollary}
\label{Galois groupoid}
Consider a (regular epi)-reflective subcategory $\XX$ of a regular Goursat category $\CC$
$$
 \xymatrix@1{\CC \ar@<1ex>[r]^I \ar@{}[r]|\bot & \;\XX, \ar@<1ex>@{_(->}[l]^U}
$$
where $U \colon \XX \to \CC$ is a full inclusion. Given any extension $f \colon A \to B$ in $\mathcal C$, then the internal Galois pregroupoid $\mathsf{Gal}(f)$ is an internal groupoid.
\end{corollary}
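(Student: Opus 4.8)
The plan is to show that, even though applying the reflector $I$ to an internal groupoid produces only an internal precategory in general, in the present Goursat setting the precategory $\mathsf{Gal}(f)$ is in fact an internal groupoid. The key observation is that every finite limit needed to express the internal groupoid structure carried by the effective equivalence relation $\Eq(f)\rightrightarrows A$ --- namely the object $\Eq(f)\times_A\Eq(f)$ of composable pairs and the object $\Eq(f)\times_A\Eq(f)\times_A\Eq(f)$ of composable triples --- is, or can be built as, a pullback of a pair of split epimorphisms, hence is preserved by $I$ by Proposition~\ref{I preserves pbs}; the inversion will then be the image under $I$ of the twisting isomorphism.

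More precisely, I would first record that the two projections $d_1,d_2\colon \Eq(f)\to A$ are both split by the diagonal $s\colon A\to\Eq(f)$, so that $\Eq(f)\times_A\Eq(f)$ is the pullback of the pair of split epimorphisms $(d_1,d_2)$; by Proposition~\ref{I preserves pbs} we get $I(\Eq(f)\times_A\Eq(f))\cong I(\Eq(f))\times_{I(A)}I(\Eq(f))$, which means that the composition of $\mathsf{Gal}(f)$ is defined on the genuine pullback of $I(d_1)$ and $I(d_2)$. Next I would exhibit the object of composable triples as the pullback of a projection $\Eq(f)\times_A\Eq(f)\to A$ (split by $a\mapsto(s(a),s(a))$) along a projection $\Eq(f)\to A$, and apply Proposition~\ref{I preserves pbs} once more, so that this triple pullback is preserved by $I$ as well. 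Since every limit occurring in the unit and associativity axioms for $\Eq(f)\rightrightarrows A$ is therefore preserved by $I$, the functoriality of $I$ makes $\mathsf{Gal}(f)$ an internal category in $\XX$. Finally, the inversion $\iota\colon\Eq(f)\to\Eq(f)$ exchanging the two projections makes $\Eq(f)\rightrightarrows A$ an internal groupoid; its defining identities only involve the already preserved pullback $\Eq(f)\times_A\Eq(f)$, so $I(\iota)$ is an inversion for $\mathsf{Gal}(f)$, which is thus an internal groupoid.

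The main obstacle I anticipate is merely one of bookkeeping: one must check that all the comparison objects needed even to state the internal category and groupoid axioms for $\mathsf{Gal}(f)$ fall among the pullbacks of split-epimorphism pairs covered by Proposition~\ref{I preserves pbs} --- in particular that the projections $\Eq(f)\times_A\Eq(f)\to A$ used to present the object of composable triples are genuinely split epimorphisms. Once this is settled, everything else follows formally from the functoriality of $I$ together with the fact that $\Eq(f)\rightrightarrows A$ is an internal groupoid in $\CC$. As an alternative to the last step, once $\mathsf{Gal}(f)$ is known to be an internal category one could transport it along the limit-preserving full inclusion $U\colon\XX\to\CC$ and invoke the fact that every internal category in a Goursat category is an internal groupoid; but the argument via the twisting isomorphism is more direct and does not require $\XX$ itself to be a Goursat category.
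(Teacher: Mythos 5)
Your argument is correct and is essentially the paper's: both reduce the claim to Proposition~\ref{I preserves pbs}, observing that all the pullbacks of split epimorphisms encoding the groupoid structure of $\Eq(f)$ (composable pairs, composable triples, inversion/shear squares) are preserved by $I$. The only difference is that the paper invokes the criterion from~\cite{BB} that a precategory is an internal groupoid when certain squares of split epimorphisms are pullbacks, whereas you verify the category and groupoid axioms by hand via the preserved pullbacks and the image of the twist map, which is just a more explicit rendering of the same proof.
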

\begin{proof}
An internal precategory is an internal groupoid when certain commutative squares of split epimorphisms are pullbacks (see~\cite{BB}, for instance). The result follows immediately from the fact that those pullbacks are preserved by $I$ (Proposition~\ref{I preserves pbs}).
\end{proof}

\end{document}